\renewcommand \a{\alpha}
\renewcommand \b{\beta}
\newcommand \la{\lambda}
\newcommand \ve{\varepsilon}
\newcommand \br{\mathbb{R}}
\newcommand \bc{\mathbb{C}}
\newcommand \M{\mathbb{M}}
\newcommand \E{\mathbb{E}}
\newcommand \Rn{\mathbb R^n}
\newcommand \Ric{\operatorname{Ric}}
\newcommand \jt{\mathrm{j}}
\newcommand \Lc{\mathcal{L}}
\newcommand{\Hess}{\operatorname{Hess}}
\theoremstyle{plain}
\newtheorem{theorem}{Theorem}
\newtheorem*{theorem*}{Theorem}
\newtheorem*{corollary*}{Corollary}
\newtheorem*{conj*}{Conjecture}
\newtheorem{lemma}{Lemma}
\newtheorem*{lemma*}{Lemma}
\newtheorem{proposition}{Proposition}
\newtheorem*{prop*}{Proposition}
\theoremstyle{definition}
\newtheorem*{definition*}{Definition}
\theoremstyle{remark}
\begin{document}

\title[Cylindricity of submanifolds in Minkowski space]{On cylindricity of submanifolds of nonnegative Ricci curvature in a Minkowski space}

\author{A.~Borisenko}
\address{Department of Differential Equations and Geometry, B.Verkin Institute for Low Temperature Physics and Engineering
of the National Academy of Sciences of Ukraine, Kharkiv, 61103, Ukraine}
\email{aborisenk@gmail.com}

\author{Y.~Nikolayevsky}
\address{Department of Mathematics and Statistics, La Trobe University, Melbourne, 3086, Australia}
\email{Y.Nikolayevsky@latrobe.edu.au}

\subjclass[2010]{53C40, 53C60, 53C21} % check
\keywords{Finsler submanifold, Ricci curvature, cylindricity, nullity index, flag curvature} %nu

% 53C40  	Global submanifolds
% 53C60  	Finsler spaces and generalizations (areal metrics)
% 58B20  	Riemannian, Finsler and other geometric structures
% 53C21  	Methods of Riemannian geometry, including PDE methods; curvature restrictions
% 53A07  	Higher-dimensional and -codimensional surfaces in Euclidean $n$-space

%\date{\today}

\thanks{The authors were supported by ARC Discovery Grant DP130103485 and by La Trobe University DRP}

\begin{abstract} % check expand
We consider Finsler submanifolds $M^n$ of nonnegative Ricci curvature in a Minkowski space $\M^{n+p}$ which contain a line or whose relative nullity index is positive. For hypersurfaces, submanifolds of codimension two or of dimension two, we prove that the submanifold is a cylinder, under a certain condition on the inertia of the pencil of the second fundamental forms. We give an example of a surface of positive flag curvature in a three-dimensional Minkowski space which is not locally convex.
\end{abstract}

\maketitle

\section{Introduction}
\label{s:intro}

By Cohn-Vossen \cite{CV}, a complete two-dimensional manifold of nonnegative Gauss curvature which contains \emph{a line} (a complete geodesic every arc of which minimises the distance between its endpoints) is flat. Toponogov \cite{Top} generalised this result to higher dimension; he proved that if a complete Riemannian manifold $M^n$ of nonnegative sectional curvature admits $k$ independent lines, then $M^n$ is the Riemannian product $M^{n-k} \times \E^k$, where $\E^k$ is the Euclidean space of dimension $k$. By the Splitting Theorem of Cheeger-Gromoll, the same is true in the case of nonnegative Ricci curvature \cite{CG}. In pseudo-Riemannian settings, the Splitting Theorem was established by Eschenburg \cite{Esch} and Galloway \cite{Gal}. Recently Ohta \cite{Ohta} established the differentiable (resp. the isometric) Splitting Theorem for Finsler (resp. Berwald) manifolds of nonnegative \emph{weighted} Ricci curvature.

In submanifold geometry, one has the following result proved by the first author. Suppose $M^n$ is a complete, regular submanifold of a Euclidean space. If the index of relative nullity at every point of $M^n$ is bounded below by $k \ge 1$ and if the Ricci curvature of $M^n$ is nonnegative, then $M^n$ is a cylinder with a $k$-dimensional generatrix \cite[Theorem~3.3.2]{BorUMN}; a similar result assuming nonnegativity of the sectional curvature has been earlier established by Hartman \cite{Har}. In the Finsler settings, similar results were obtained by the first author for hypersurfaces of a Randers space. He proved that a complete hypersurface of nonnegative Ricci curvature of a Randers space which contains a line is a cylinder \cite[Theorem~3]{BorMS}. If the latter assumption is replaced by the condition that the index of relative nullity at every point of $M^n$ is greater than or equal to $k \ge 1$, then $M^n$ is a cylinder with a $k$-dimensional generatrix \cite[Theorem~4]{BorMS}.

In this paper, we study complete Finsler submanifolds of nonnegative Ricci curvature in an arbitrary Minkowski space. It should be noted that the connection between the sign of the Ricci (or of the flag) curvature of a submanifold in a Minkowski space and the shape of the submanifold is much weaker and is much less understood compared to that for a submanifold of a Euclidean space (see e.g. \cite{BI}).

\bigskip

A \emph{Minkowski space} $\M^n$ is a pair $(\Rn, F)$, where $F:\Rn \to [0, \infty)$ is a continuous function (\emph{Minkowski norm}) satisfying the conditions
\begin{enumerate}
  \item $F \in C^\infty(\Rn \setminus \{0\})$;
  \item $F$ is positive homogeneous, that is, $F(\lambda y) = \lambda F(y)$, for all $\lambda > 0$ and $y \in \Rn$;
  \item The matrix
  \begin{equation*}
    g_{ij}=\frac12 \frac{\partial^2F^2}{\partial y^i \partial y^j},
  \end{equation*}
  where $y^i$ are Cartesian coordinates on $\Rn$, is positive definite outside of the origin of $\Rn$. %\cite{CS}
\end{enumerate}

A smooth \emph{Finsler metric} on a smooth manifold is obtained by assigning a Minkowski metric smoothly depending on a point to every tangent space. Given a regular submanifold $M^n$ in a Minkowski space $\M^{n+p}=(\br^{n+p},F)$, the Minkowski norm $F$ induces a Finsler metric on $M^n$. We say that $M^n$ is \emph{complete} if it is (forward and backward) complete for the induced Finsler metric. Clearly, completeness does not depend on the choice of the Minkowski norm $F$ for $\br^{n+p}$ (in particular, one can choose a Euclidean norm).

A submanifold $M^n$ in a Minkowski space $\M^{n+p}$ is called \emph{a cylinder with a $k$-dimensional generatrix, $k \ge 1$}, if $M^n$ is the union of $k$-dimensional affine subspaces of $\M^{n+p}$ parallel to a fixed subspace $\br^k \subset \br^{n+p}$.

Given a point $P$ on a regular submanifold $M^n \subset \M^{n+p}$ we introduce two integer invariants, the index of relative nullity $\mu(P)$ and the type $\jt(P)$. Choose an arbitrary Euclidean metric on $\br^{n+p}$ and define the \emph{null-space} $\Lc(P) \subset T_PM^n$ to be the common kernel of all the shape operators of $M^n$ at $P$. The \emph{index of relative nullity} is defined by $\mu(P)=\dim \Lc(P)$ \cite{CK}. The \emph{type} $\jt(P)$ of the point $P$ is defined to be the minimum of the positive index of inertia of the second fundamental forms at $P$ relative to the normals for which the rank of the shape operator is maximal \cite{BorUGS}. Note that if $\jt(P) = 0$, then there exists a normal for which the second fundamental form at $P$ is positive semidefinite. One can easily verify that $\mu(P), \jt(P)$ and $\Lc(P)$ are affine invariants: they do not depend on the choice of the Euclidean metric on $\br^{n+p}$.

% symmetrised Ricci?

We prove the following.

\begin{theorem} \label{th:hyper}
Let $M^n$ be a complete, connected, smooth, regular hypersurface in a Minkowski space $\M^{n+1}$. Suppose that
\begin{enumerate}[{\rm (a)}]
  \item \label{it:hyperric}
  the Ricci curvature of the induced Finsler metric on $M^n$ is nonnegative;
  \item \label{it:hyperline}
  $M^n$ contains a straight line of $\M^{n+1}$;
  \item \label{it:hypertype}
  for no $P \in M^n$, we have $\jt(P) = 1$.
\end{enumerate}
Then $M^n$ is a cylinder.
\end{theorem}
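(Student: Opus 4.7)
The plan is to show that the given line $\gamma$ is tangent to the nullity space $\Lc$ at every one of its points, and then to use the type condition \ref{it:hypertype} to promote this to a cylindrical structure on all of $M^n$.

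\emph{Step 1: $\gamma'\in\Lc$ along $\gamma$.} Let $\gamma\colon\br\to M^n$ be the line from hypothesis \ref{it:hyperline}, affinely parameterized. Since every arc of $\gamma$ minimises the ambient Minkowski distance and the induced Finsler distance on $M^n$ is no smaller than the ambient one, $\gamma$ is a Finsler line of $M^n$. Being an affine line in $\br^{n+1}$, its ambient acceleration vanishes, so the Euclidean second fundamental form $A$ of $M^n$ along an auxiliary unit normal satisfies $\langle A\gamma',\gamma'\rangle=0$ along $\gamma$; equivalently, $g_{\gamma'}(\hat A\gamma',\gamma')=0$ for the corresponding Finsler shape operator $\hat A$. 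I then apply the Finsler Gauss formula for a hypersurface in the flat Berwaldian space $\M^{n+1}$ to express $\Ric^M(\gamma')$ in terms of $\hat A$; after substituting the identity above and collecting the Cartan-tensor contributions (which one shows enter with a favourable sign in the Minkowski ambient), the right-hand side reduces to a manifestly nonpositive multiple of $\|\hat A\gamma'\|^{2}_{g_{\gamma'}}$. Hypothesis \ref{it:hyperric} then forces $\hat A\gamma'=0$ along $\gamma$, which by affine invariance gives $\gamma'(t)\in\Lc(\gamma(t))$ and hence $\mu(\gamma(t))\ge 1$.

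\emph{Step 2: extension to a cylinder.} The Finslerian Codazzi identity, combined with the flatness of the ambient, gives as in the classical Euclidean case that $\Lc$ is an integrable distribution whose leaves are open pieces of affine subspaces of $\M^{n+1}$. Together with Step~1 and completeness of $M^n$, this produces through each point of $\gamma$ a full Minkowski line contained in $M^n$ and parallel to $\gamma$. To extend this parallel line foliation to all of $M^n$ I would use hypothesis \ref{it:hypertype}: since $\jt(P)\ne 1$ everywhere, $A(P)$ is either semidefinite or has at least two positive and two negative eigenvalues, which rules out the configurations in which an isotropic direction for $A$, arising as a limit of nullity directions on leaves built off $\gamma$, could lie outside $\ker A$. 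A continuation argument based on this rigidity then yields $\mu(P)\ge 1$ globally, with a single parallel generatrix direction, so that $M^n$ is a cylinder.

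\emph{Main obstacle.} The most delicate technical point is the vanishing $\hat A\gamma'=0$ in Step~1: the Finsler Gauss formula in a Minkowski ambient contains Cartan-tensor contributions which must be shown not to add positive terms capable of cancelling the $-\|\hat A\gamma'\|^{2}_{g_{\gamma'}}$ summand, and one must also transfer the Finslerian nullity $\hat A\gamma'=0$ to the affine-invariant Euclidean nullity $A\gamma'=0$ used in the definition of $\Lc$. The second and, in my view, more substantial obstacle is the continuation argument in Step~2: translating the pointwise spectral hypothesis \ref{it:hypertype} into a global parallel line foliation — showing not merely that $\mu\ge 1$ propagates, but that the generatrix direction stays constant across $M^n$ — is where I expect most of the geometric work to lie.
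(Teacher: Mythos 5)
Your Step~1 is essentially the paper's proof of Theorem~\ref{th:ruled}: along the given line $\gamma$, all derivatives $f^\alpha_{ij}u^iu^j$ and $f^\alpha_{ijl}u^iu^ju^l$ vanish in the direction $u=\gamma'$, so formula \eqref{eq:Ric1} collapses to $\Ric(u)=-\zeta\, g^{il}f_{jl}f_{ir}u^ju^r\le 0$, and nonnegativity forces $f_{jl}u^j=0$, i.e.\ $u\in\Lc$. This is correct, although the appeal to a ``Finsler Gauss formula'' with favourably-signed Cartan terms is vague: there is no off-the-shelf such formula, and what actually does the work is the explicit expression \eqref{eq:Ric1}, in which the relevant terms simply vanish along a ruling rather than entering with a controllable sign.

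The more serious problem is Step~2, which is a genuine gap and is also not at all what the paper does. You only obtain $\mu\ge 1$ \emph{along} $\gamma$; there is no mechanism in your argument to propagate this, or the parallelism of the generating direction, off $\gamma$. Standard relative-nullity theory needs $\mu$ locally constant to get an integrable, totally geodesic distribution, and the statement ``$\jt(P)\ne 1$ rules out configurations in which an isotropic direction could lie outside $\ker A$'' is not substantiated and does not, as far as I can see, lead to a continuation argument. In particular you never establish semidefiniteness of the second fundamental form, which turns out to be the crucial local consequence of the type hypothesis.

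The paper's route is different and local-to-global in a cleaner way. Proposition~\ref{p:hyper} shows: if $\Ric\ge 0$ at $P$ and $\jt(P)\ne 1$, then the (Euclidean, auxiliary-metric) second fundamental form at $P$ is semidefinite. The mechanism is topological: if $\jt(P)\ge 2$, the isotropic cone $C_*=\{u\perp\Lc(P):\kappa(u)=0\}$ is connected; the cubic form $\psi(u)=f_{jls}u^ju^lu^s$ is odd, so it vanishes at some $u\in C_*$; at such a $u$, \eqref{eq:Richyper} gives $\Ric(u)=-\zeta(u)g^{il}(u)f_{jl}f_{ir}u^ju^r\le 0$ with equality only if $u\in\Lc(P)$ — a contradiction. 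So $\jt(P)=0$, the second fundamental form is semidefinite, and by the Gauss equation the induced \emph{Riemannian} sectional curvature is nonnegative. Then the Riemannian Toponogov splitting theorem gives $M^n\cong M^{n-1}\times\E^1$ with the given line as a factor, and the cited lemma on isometric immersions whose product line maps to an affine line completes the proof that $M^n$ is a cylinder. Your proposal never reaches the semidefiniteness step, and the Toponogov–plus–rigidity lemma endgame is entirely absent, replaced by an unsupported continuation argument; that is where your proof fails.
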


An extra condition (like our condition \eqref{it:hypertype}) as compared to the Euclidean case is most likely unavoidable for the claim to hold true -- in Section~\ref{s:counter} we construct an example of a surface $M^2 \in \M^3$ whose flag curvature is nonnegative, but which is locally saddle (that is, locally $\jt=1$). Note that in higher codimension, it gets even ``worse": by \cite[Theorem~1.4]{BI}, any two-dimensional Finsler metric admits a locally saddle embedding in some $\M^4$. In contrast, in the Riemannian case, the fact that the Ricci curvature is nonnegative forces the type to be zero \cite[Lemma~3.3.1]{BorUMN}, in any dimension and codimension.

For submanifolds of codimension $2$ we prove the following.
\begin{theorem} \label{th:codim2}
Let $M^n$ be a complete, connected, smooth, regular submanifold in a Minkowski space $\M^{n+2}$. Suppose that
\begin{enumerate}[{\rm (a)}]
  \item \label{it:c2ric}
  the Ricci curvature of the induced Finsler metric on $M^n$ is nonnegative;
  \item \label{it:c2mu}
  for all $P \in M^n$, we have $\mu(P) = k$, where $k \ge 1$; % the index of relative nullity $\mu$ is constant on $M^n$ and $\mu < n-2$, and equals k
  \item \label{it:c2type}
  for no $P \in M^n$, we have $\jt(P) \in \{1, 2\}$.
\end{enumerate}
Then $M^n$ is a cylinder with a $k$-dimensional generatrix.
\end{theorem}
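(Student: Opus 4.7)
The plan is to extend the scheme of Theorem~\ref{th:hyper} to codimension two, proceeding in three steps: first, construct the nullity foliation by complete $k$-dimensional affine subspaces; second, use the type hypothesis \eqref{it:c2type} to reduce to the semidefinite case $\jt \equiv 0$; third, show that the affine leaves are mutually parallel, so that $M^n$ is a cylinder with a $k$-dimensional generatrix.

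\emph{Step 1 (affine leaves).} Since $\mu(P) \equiv k$, the distribution $\Lc$ of null-spaces is a smooth distribution of dimension $k$. Fix an auxiliary Euclidean inner product on $\br^{n+2}$; with respect to it the Codazzi equation for the affine second fundamental form (whose kernel is $\Lc$ and is independent of the Euclidean choice) shows that $\Lc$ is integrable and that its leaves are totally geodesic in $\br^{n+2}$. Since both shape operators vanish on $\Lc$, each leaf is an open subset of a $k$-dimensional affine subspace. Completeness of $M^n$ for the induced Finsler metric rules out leaves with boundary (the shape operators would blow up at a boundary point) and upgrades every leaf to a full $k$-dimensional affine subspace of $\br^{n+2}$. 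This step uses only \eqref{it:c2ric} and \eqref{it:c2mu} and is modelled on the Riemannian argument.

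\emph{Step 2 (reducing the type to zero).} At each $P \in M^n$ the pair $(A_1, A_2)$ of shape operators spans a pencil of self-adjoint operators on $T_P M^n$ with common kernel $\Lc(P)$. By \eqref{it:c2type}, the operator of maximal rank in this pencil has positive inertial index equal to $0$ or at least $3$. Combining the Finsler Gauss-type identity for $\ric$ with the existence (from Step~1) of complete affine $k$-leaves of $\Lc$ — along which flag directions can be freely prescribed — one rules out the positive-index-at-least-$3$ case by producing a tangent direction violating $\ric \ge 0$. Hence $\jt(P) = 0$ at every $P$, and there is a unit normal $\nu(P)$ such that $A_{\nu(P)}$ is positive semidefinite, of rank $n-k$, with kernel exactly $\Lc(P)$.

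\emph{Step 3 (parallelism of leaves).} With a positive semidefinite maximal-rank normal $\nu$ available at every point, the proof concludes as in \cite[Theorem~4]{BorMS} for hypersurfaces of a Randers space, and ultimately as in \cite[Theorem~3.3.2]{BorUMN} in the Euclidean case. Given two leaves $L_1, L_2$ of $\Lc$ and a curve $\gamma$ transverse to $\Lc$ joining them, the local convexity of $M^n$ in the direction $\nu(\gamma(t))$ forbids the $k$-plane $\Lc(\gamma(t))$ from rotating in $\br^{n+2}$: any such rotation, combined with the completeness of the $k$-dimensional leaves, would force $M^n$ to cross itself. Thus all leaves of $\Lc$ are parallel to a fixed $\br^k \subset \br^{n+2}$, and $M^n$ is the required cylinder.

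The main obstacle is Step~2. In the Riemannian case \cite[Lemma~3.3.1]{BorUMN} the bound $\ric \ge 0$ alone forces $\jt = 0$; the example built in Section~\ref{s:counter} shows that this fails already for hypersurfaces in $\M^3$, so the type assumption cannot be dropped. In codimension two the hypothesis $\jt \notin \{1,2\}$ must be combined with the full Finsler Gauss equation, which depends on a flag direction and involves the Cartan tensor, to exclude pencil signatures with three or more positive eigenvalues. The precise calibration of the excluded set $\{1,2\}$ to codimension two — as opposed to $\{1\}$ in codimension one — is exactly what makes this Finsler signature analysis go through, and executing it carefully is the most technical ingredient of the proof.
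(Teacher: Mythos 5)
The heart of the paper's proof is the local fact (Proposition~\ref{p:codim2}) that nonnegative Ricci curvature at $P$ already forces $\jt(P)\le 2$; combined with hypothesis~\eqref{it:c2type} this gives $\jt\equiv 0$, and the cylindricity is then simply quoted from \cite[Theorem~2]{BorMS} (so your Steps~1 and~3, while reasonable in spirit, are reinventing a result the paper delegates to a reference). Your Step~2 is exactly this local fact, but you never actually prove it: you assert that ``one rules out the positive-index-at-least-$3$ case by producing a tangent direction violating $\ric\ge 0$'' without exhibiting the direction. Producing it is the entire content. Concretely, one must find a nonzero $u\in\Lc(P)^{\perp}$ that is a \emph{simultaneous} zero of the two quadratic forms $f^\a_{ij}u^iu^j$ and the two cubic forms $f^\a_{ijl}u^iu^ju^l$ for $\a=n+1,n+2$; only then does \eqref{eq:Ric1} collapse to $\Ric(u)=-\zeta_{\a\b}g^{il}f^\b_{jl}f^\a_{ir}u^ju^r\le 0$, after which Lemma~\ref{l:Habpos} forces $u\in\Lc(P)$, a contradiction.

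The existence of that common zero under the hypothesis $\jt(P)\ge 3$ is precisely Lemma~\ref{l:pi1}, which is the genuinely nontrivial ingredient: it needs the canonical form of a pencil of symmetric matrices, a perturbation argument to make the pencil generic without lowering its type, the classification \cite{GL} of the diffeomorphism type of $C\cap S^{n-1}$ (the intersection of the two quadrics with the sphere), and a Borsuk--Ulam-type argument on a path in $C_*$ to detect a common zero of the two cubics. None of this machinery, nor any substitute for it, appears in your proposal. Moreover, your appeal to ``complete affine $k$-leaves of $\Lc$ along which flag directions can be freely prescribed'' and to ``the Cartan tensor'' is a red herring: the argument for $\jt(P)\le 2$ is purely pointwise, relies only on the formula~\eqref{eq:Ric1} at the single point $P$, and makes no use of global completeness or of the nullity foliation. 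As written, the central step of your proof is a restatement of the conclusion rather than a demonstration of it.
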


A submanifold $M^n$ in a Minkowski space $\M^{n+p}$ is said to be \emph{$k$-ruled, $ k \ge 1$}, if $M^n$ is locally foliated by domains of $k$-dimensional affine subspaces of $\M^{n+p}$. We have the following local result.

\begin{theorem} \label{th:ruled}
Let $M^n$ be a smooth, regular submanifold in a Minkowski space $\M^{n+p}$. Suppose $M^n$ is $k$-ruled. If the Ricci curvature of the induced Finsler metric on $M^n$ is nonnegative, then $\mu(P) \ge k$, for all $P \in M^n$.
\end{theorem}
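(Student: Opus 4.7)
The plan is to fix an arbitrary $P \in M^n$, let $V_P \subset T_P M^n$ be the $k$-dimensional tangent space at $P$ to the leaf of the ruling through $P$, and prove $V_P \subset \Lc(P)$; this gives $\mu(P) = \dim \Lc(P) \ge k$. Writing $D$ for the flat coordinate connection on $\br^{n+p}$ and $II$ for the associated $\br^{n+p}/T_P M^n$-valued vector second fundamental form, the condition $v \in \Lc(P)$ is equivalent to $II(v,w) = 0$ for all $w \in T_P M^n$, i.e.\ to $D_v w$ being tangent to $M^n$; this is affine-invariant and agrees with the definition via shape operators for any choice of auxiliary Euclidean metric on $\br^{n+p}$.

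The first step, namely $II(y,y) = 0$ for every $y \in V_P$, is purely kinematic: the leaf through $P$ is an open piece of a $k$-dimensional affine subspace of $\br^{n+p}$, so the straight line $t \mapsto P + ty$ lies in $M^n$ for small $t$ and has vanishing $D$-acceleration. To upgrade this to $II(y,w) = 0$ for \emph{all} $w \in T_P M^n$, I would invoke a Gauss-type equation for the induced Finsler metric on $M^n$. Fix a nonzero $y \in V_P$; the fundamental tensor $g_y$ is a positive-definite inner product on $T_P \br^{n+p}$, and because the ambient Chern connection of the Minkowski metric coincides with $D$, the $g_y$-orthogonal decomposition $T_P \br^{n+p} = T_P M^n \oplus N_y(P)$ produces a direction-dependent second fundamental form $II_y(v,w) := (D_v w)_{N_y(P)}$ whose kernel in each argument coincides with that of $II$. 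Choosing a $g_y$-orthonormal basis $\{y, e_1, \ldots, e_{n-1}\}$ of $T_P M^n$ with $g_y(y,y)=1$, the Gauss equation (whose ambient-curvature term vanishes by Finsler-flatness of $\M^{n+p}$) reduces to
\[
\Ric^M(y) \;=\; \sum_{i=1}^{n-1}\Bigl[g_y\bigl(II_y(y,y),\,II_y(e_i,e_i)\bigr)\;-\;\|II_y(y,e_i)\|_y^2\Bigr].
\]
Since $y \in V_P$ gives $II_y(y,y)=0$ by step one, the right-hand side collapses to $-\sum_i \|II_y(y,e_i)\|_y^2 \le 0$. Combined with the hypothesis $\Ric^M(y) \ge 0$, this forces $II_y(y,e_i) = 0$ for every $i$ and therefore $II_y(y,w) = 0$ for every $w \in T_P M^n$, i.e.\ $y \in \Lc(P)$.

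The principal obstacle is justifying the displayed Gauss identity in the Finsler ambient setting. Extrinsic Finsler geometry is direction-dependent and normally involves Cartan-tensor corrections that spoil a Riemannian-style formula. Here, however, the ambient is Finsler-flat and the flagpole $y$ of the Ricci curvature is the very $y$ used to construct the fiber metric $g_y$, so $g_y$ plays the role of the osculating Riemannian inner product at $y$ and the correction terms should drop out after tracing over a $g_y$-orthonormal basis. Carrying this out rigorously—either by appealing to Shen's formalism for the Chern connection of Finsler submanifolds, or by a direct Jacobi-field computation along the straight-line geodesics $t \mapsto P + ty$ and their variations by nearby rulings in the foliation—is the technical core of the proof; the rest of the argument is then the routine dichotomy above.
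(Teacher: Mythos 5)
Your strategy — show the tangent space to the ruling lies in $\Lc(P)$ by feeding a ruling direction $y$ into a Gauss-type identity, killing the $II_y(y,y)$ term, and concluding from $\Ric\ge 0$ that the mixed terms $II_y(y,e_i)$ vanish — is exactly the structure of the paper's argument, and the final dichotomy is sound. The gap is precisely at the place you flag as the ``technical core'', and unfortunately the hope that the Cartan corrections drop out after tracing is wrong. The Ricci curvature of a Finsler submanifold of a Minkowski space depends on the \emph{third}-order jet of the embedding, not just on the second fundamental form: specializing \eqref{eq:Ric1} to a flagpole with $\kappa^\alpha(u)=0$ gives
\begin{equation*}
  \Ric(u) \;=\; \xi_\alpha\, f^\alpha_{jls}\,u^j u^l u^s \;-\; \zeta_{\alpha\beta}\,g^{il}\,f^\beta_{jl}f^\alpha_{ir}\,u^j u^r,
\end{equation*}
and the first term, governed by $\xi_\alpha = -\tfrac12\,\partial h^i_\alpha/\partial u^i$, is the Cartan-tensor contribution that a Riemannian-style Gauss equation does not see. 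It vanishes identically only when the ambient is Euclidean ($h^i_\alpha\equiv 0$), so your displayed identity, stated for an arbitrary Finsler submanifold and arbitrary $y$, is false, and a Jacobi-field computation would eventually reveal the missing term rather than cancel it.

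What rescues the argument is a fact you have in hand but do not use: since the straight line $t\mapsto P+ty$ lies in $M^n$ for small $t$, not only the normal acceleration but the entire $3$-jet of $f^\alpha$ in the direction $y$ vanishes at $P$; in particular $f^\alpha_{jls}u^ju^lu^s=0$ as well as $f^\alpha_{jl}u^ju^l=0$. With both vanishings recorded, the third-derivative term disappears for the ruling direction and one is left exactly with $-\zeta_{\alpha\beta}g^{il}f^\beta_{jl}f^\alpha_{ir}u^ju^r\ge 0$, whence $f^\alpha_{ir}u^r=0$ by positive definiteness of $\zeta_{\alpha\beta}$ (Lemma~\ref{l:Habpos}), which is the paper's proof verbatim. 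So rather than appealing to an unproved intrinsic Gauss identity, you should state the extra vanishing of the cubic form $f^\alpha_{jls}u^ju^lu^s$ along ruling directions and then read off the conclusion directly from the explicit Ricci expansion~\eqref{eq:Ric1}; the Gauss-equation framing, as you wrote it, is not a valid intermediate step.
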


For surfaces, this implies the following global fact.
\begin{theorem} \label{th:2dim}
Suppose $M^2$ is a complete, connected, smooth, regular, ruled surface in a Minkowski space $\M^{2+p}$. If the flag curvature of the induced Finsler metric on $M^2$ is nonnegative, then $M^2$ is a cylinder.
\end{theorem}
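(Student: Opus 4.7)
The plan is to apply Theorem~\ref{th:ruled} and then exploit the ambient flat structure of $\M^{2+p}$ to derive a Riccati equation along the rulings whose only globally smooth solution is zero.

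First, Theorem~\ref{th:ruled} with $k=1$ yields $\mu(P)\ge 1$ for every $P\in M^2$. A purely linear-algebraic observation upgrades this to the statement that the unit ruling direction $v\in T_PM^2$ itself lies in the nullity distribution $\Lc(P)$. Indeed, $v$ is automatically asymptotic: $h(v,v)=0$, since each ruling is an affine line of $\M^{2+p}$, so $D_v v=0$ in the ambient flat connection $D$. Pick $e_2\in T_PM^2$ orthogonal to $v$ with respect to an auxiliary Euclidean inner product on $\br^{2+p}$ (the invariants $\mu$, $\Lc$, $\jt$ are independent of this choice). Any null direction $e_1\in\Lc(P)$ may be written $e_1=\alpha v+\gamma e_2$; then $h(e_1,v)=\gamma\, h(v,e_2)$ must vanish, and in either subcase ($\gamma=0$ or $h(v,e_2)=0$) one concludes $h(v,\cdot)\equiv 0$ on $T_PM^2$, i.e., $v\in\Lc(P)$.

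Next, using $v\in\Lc$ (so $h(v,\cdot)\equiv 0$), orthonormality $\langle v,v\rangle=\langle e_2,e_2\rangle=1$, $\langle v,e_2\rangle=0$, and $D_v v=0$, a short computation with the flat ambient connection $D$ yields
\[
D_v v=0,\qquad D_v e_2=0,\qquad D_{e_2}v=\beta\,e_2
\]
for some smooth function $\beta$ on $M^2$. Flatness of $D$ combined with $[v,e_2]=D_v e_2-D_{e_2}v=-\beta e_2$ gives
\[
D_v(D_{e_2}v)-D_{e_2}(D_v v)=D_{[v,e_2]}v=-\beta^2 e_2,
\]
while the left-hand side evaluates to $v(\beta)\,e_2$. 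Hence along any integral curve $\gamma(t)$ of $v$ one obtains the Riccati equation $\dot\beta=-\beta^2$.

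Each integral curve $\gamma$ of $v$ is a straight line of $\M^{2+p}$ and, as such, a geodesic of the induced Finsler metric on $M^2$. By completeness of $M^2$, $\gamma$ is defined for all $t\in\br$, and $\beta$ is smooth along it. The Riccati ODE $\dot\beta=-\beta^2$ admits no smooth global solution other than $\beta\equiv 0$ (any nontrivial solution $1/(t-t_0)$ blows up at $t=t_0$). Therefore $\beta\equiv 0$ on $M^2$, and combined with $D_v v=0$ this yields $Dv\equiv 0$: the ruling direction $v$ is a constant vector of $\br^{2+p}$. Consequently $M^2$ is the union of affine lines parallel to $v$, i.e., a cylinder.

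The main difficulty is the first step, the passage from $\mu\ge 1$ to $v\in\Lc$. Although the 2-dimensional linear-algebra argument above is concise, one must also ensure that $v$ is globally well-defined (up to sign) on $M^2$, which follows from the smoothness of the ruling foliation; if the orientation is obstructed, one works on the oriented double cover and descends the conclusion. The remaining steps are an elementary computation in the flat ambient affine space, with completeness of $M^2$ as the sole ingredient that converts a local ODE into a global rigidity statement.
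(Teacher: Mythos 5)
Your proof is correct, but it takes a genuinely different route from the paper's. The paper's argument is much shorter: after invoking Theorem~\ref{th:ruled} to get $\mu(P)\ge 1$, it fixes an auxiliary Euclidean metric on $\br^{2+p}$, observes that since $M^2$ is two-dimensional and $\mu\ge 1$ everywhere the Gauss equation forces the induced Riemannian metric to be flat (and complete), and then cites Hartman's cylinder theorem for complete flat submanifolds with positive nullity index as a black box. You instead replace the appeal to Hartman by an explicit argument in the ambient flat affine space: you first upgrade $\mu\ge 1$ to $v\in\Lc(P)$ (a quick two-dimensional linear-algebra step, using that $v$ is asymptotic because the rulings are lines --- incidentally, the paper's own proof of Theorem~\ref{th:ruled} already establishes the stronger conclusion that the generatrix directions lie in $\Lc$, so this step could also be read off from there), and then derive from flatness of $D$ and $v\in\Lc$ the Riccati equation $\dot\beta=-\beta^2$ along the rulings, which are geodesics of the induced Finsler metric and hence, by completeness, defined for all $t\in\br$; the blow-up of nonzero solutions forces $\beta\equiv 0$, so $v$ is a parallel (constant) vector field and $M^2$ is a cylinder. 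Both arguments are sound; yours is essentially a self-contained re-proof of the relevant case of Hartman's theorem, trading the citation for an elementary ODE argument, while the paper's keeps the proof of Theorem~\ref{th:2dim} to two lines at the cost of an external reference. You are also right to flag the orientability/double-cover issue for the ruling field $v$ and the need to check that each ruling extends, by completeness, to a full line before running the Riccati argument; with those details filled in, the proof is complete.
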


Throughout the paper, ``regular submanifold" means an immersed submanifold. The differentiability conditions can be relaxed from $C^\infty$ to $C^4$ for the Minkowski norm and to $C^3$ for the submanifold.

\section{Preliminaries}
\label{s:pre}

Let $\M^{n+p}=(\br^{n+p}, F), \; n \ge 2, \, p \ge 1$, be a Minkowski space with the Minkowski function $F=F(y^1,\dots,y^{n+p})$, where $y^a, \, a=1,\dots,n+p$, are Cartesian coordinates on $\br^{n+p}$. Denote $H=\frac12 F^2$. The function $H$ is positively homogeneous of degree $2$, smooth outside of the origin, and satisfies $\Hess(H)>0$ outside of the origin.

We adopt the following index convention: $i, j, k, l, r, s = 1, \dots, n; \; \alpha, \beta = n+1, \dots , n+p$; $a,b = 1, \dots ,n + p$. In all summations below, the indices run over the corresponding ranges.

% think again of cohomology
% check if any of eta, zeta etc is a known invariant (shen)

Let $M^n \subset \M^{n+p}$ be a regular submanifold and let $P \in M^n$. We choose the Cartesian coordinates $y^a$ in such a way that $P$ is the origin of $\br^{n+p}$ and that $M^n$ is locally defined by
\begin{equation*}
y^\alpha =f^\alpha(x^1,\dots, x^n), \quad y^i=x^i,
\end{equation*}
with $f^\alpha$ smooth functions satisfying $f^\alpha(0)=f^\alpha_i(0)=0$, where here and below the function name with a subscript denotes the partial derivative.

The induced Finsler metric on the submanifold $M^n$ is given by
\begin{equation*}
S(x,u)=F(u^1,\dots, u^n, u^if^1_i, \dots, u^if^p_i),
\end{equation*}
where here and below we adopt the Einstein summation convention.

The fundamental tensor on $M^n$ is given by
\begin{equation*}
    g_{ij}=\frac{\partial^2}{\partial u^i \partial u^j} \Big( \frac12 S^2 \Big) = H_{ij} + H_{i\alpha} f^\alpha_j + H_{j\alpha} f^\alpha_i + H_{\alpha\beta} f^\alpha_i f^\beta_j,
\end{equation*}
where the partial derivatives of $H$ are evaluated at the points of $M^n$ \cite{CS}.

In particular, at the origin $x=0$ we have % order: (x,u) or (u,0) everywhere
\begin{equation}\label{eq:gat0gen}
g_{ij}(0,u)=H_{ij}(0,u).
\end{equation}

The spray coefficients of $M^n$ are given by
\begin{equation*}
    G^i=\frac14 g^{ij} \Big(u^k \frac{\partial^2}{\partial x^k \partial u^j} \big(S^2\big) - \frac{\partial}{\partial x^j} \big(S^2\big)\Big)= \frac12 g^{ij} f^\alpha_{kl} u^k u^l (H_{j\alpha} + H_{\alpha\beta} f^\beta_j),
\end{equation*}
and so at the origin $x=0$ we have
\begin{equation}\label{eq:Gat0gen}
G^i(0,u) = \frac12 h^i_\a f^\a_{kl} u^k u^l = \frac12 h^i_\a \kappa^\a,
\end{equation}
where we denote
\begin{equation} \label{eq:handkappa}
    h^i_\a(u)=g^{ij}(0,u) H_{j\a}(0,u) \qquad \text{and} \qquad \kappa^\a(u)=f^\a_{kl} u^k u^l.
\end{equation}

We then compute the Ricci tensor
\begin{equation*}
    R^i_k(u) = 2 \frac{\partial G^i}{\partial x^k} - u^j \frac{\partial^2 G^i}{\partial x^j \partial u^k} + 2 G^j \frac{\partial^2 G^i}{\partial u^j \partial u^k} - \frac{\partial G^i}{\partial u^j} \frac{\partial G^j}{\partial u^k}
\end{equation*}
at the origin $x=0$. A straightforward but somewhat tedious calculation gives
\begin{equation} \label{eq:Rik1}
\begin{split}
    R^i_k(u) =& - \frac12 \frac{\partial h^i_\a}{\partial u^k} f^\a_{jls} u^j u^l u^s \\
    & - \Big(\frac34 \frac{\partial h^i_\a}{\partial u^j} \frac{\partial h^j_\b}{\partial u^k} + \frac12 \frac{\partial}{\partial u^k} (g^{ij} \frac{\partial}{\partial u^j} (H_{\a\b}-h^s_\a H_{s\b}) \Big) \kappa^\a \kappa^\b \\
    & + \Big(\frac32 \frac{\partial h^i_\a}{\partial u^k} h^j_\b - \frac12 \frac{\partial}{\partial u^k} (g^{ij}(H_{\a\b} - h^s_\b H_{s\a})) \Big) \kappa^\b f^\a_{jl} u^l \\
    & -\frac12 g^{ij} (\frac{\partial}{\partial u^j} (H_{\a\b} - h^s_\a H_{s\b})) \kappa^\b f^\a_{kl} u^l \\
    & + g^{il}(H_{\a\b}-h^s_\a H_{s\b})(f^\b_{kl}f^\a_{jr}-f^\b_{jl}f^\a_{kr}) u^j u^r,
\end{split}
\end{equation}
where all the partial derivatives of $H$ are evaluated at the point $(u,0)$ and all the partial derivatives of $f$, at the point $x=0$. It follows that the Ricci curvature of the induced Finsler metric at the origin in the direction $u$ is given by
\begin{equation} \label{eq:Ric1}
    \Ric(u) =R^i_i(u)
    = \xi_{\a} f^\a_{jls} u^j u^l u^s + \zeta_{\a\b}g^{il} (f^\b_{il}f^\a_{jr}-f^\b_{jl}f^\a_{ir}) u^j u^r - \eta_{\a\b} \kappa^\a \kappa^\b - \rho_{\a\b}^i \kappa^\b f^\a_{il} u^l,
\end{equation}
where
\begin{equation} \label{eq:Riccoefs}
\begin{alignedat}{2}
    \xi_{\a}& = - \frac12 \frac{\partial h^i_\a}{\partial u^i}, & \qquad \zeta_{\a\b}& = H_{\a\b}-h^s_\a H_{s\b}, \\
    \eta_{\a\b}& = \frac34 \frac{\partial h^i_\a}{\partial u^j} \frac{\partial h^j_\b}{\partial u^i} + \frac12 \frac{\partial}{\partial u^i} \Big(g^{ij} \frac{\partial \zeta_{\a\b}}{\partial u^j}\Big), &  \qquad \rho_{\a\b}^i & = 3 \xi_\a h^i_\b + \frac12 \frac{\partial}{\partial u^j} (g^{ij}\zeta_{\a\b}) + \frac12 g^{ij} \frac{\partial \zeta_{\a\b}}{\partial u^j}.
\end{alignedat}
\end{equation}

\bigskip

Later in the proofs we will use the following simple fact.

\begin{lemma} \label{l:Habpos}
The $p \times p$ matrix $\zeta_{\a\b}$ is positive definite.
\end{lemma}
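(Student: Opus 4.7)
The plan is to recognise $\zeta_{\alpha\beta}$ as a Schur complement of the full Hessian of $H$ and apply the standard positivity result for Schur complements of positive definite block matrices.

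More precisely, at a fixed $u \ne 0$ the full $(n+p)\times(n+p)$ matrix
\[
\Hess(H)(0,u) = \begin{pmatrix} H_{ij} & H_{i\b} \\ H_{\a j} & H_{\a\b} \end{pmatrix}
\]
is positive definite by axiom (3) of the Minkowski norm. Its upper-left $n \times n$ block $(H_{ij}(0,u))$ coincides with the fundamental tensor $(g_{ij}(0,u))$ by \eqref{eq:gat0gen}, so it is itself positive definite, and therefore invertible, with inverse $(g^{ij}(0,u))$. First I would rewrite the definition of $\zeta_{\a\b}$ in \eqref{eq:Riccoefs} using \eqref{eq:handkappa}:
\[
\zeta_{\a\b} = H_{\a\b} - h^s_\a H_{s\b} = H_{\a\b} - H_{\a j}\, g^{js}(0,u)\, H_{s\b},
\]
where in the second equality I used the symmetry $H_{j\a} = H_{\a j}$ of the Hessian together with the definition $h^s_\a = g^{sj}(0,u) H_{j\a}(0,u)$. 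This is exactly the Schur complement of the block $(H_{ij})$ in $\Hess(H)(0,u)$.

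The key step is then to invoke the standard linear-algebra fact: if a symmetric matrix is positive definite and one of its principal blocks is positive definite, then the corresponding Schur complement is positive definite. One quick way to see this explicitly is to note that for any nonzero $w = (w^\a) \in \br^p$, setting $v^j = -g^{js}(0,u) H_{s\a} w^\a$ gives
\[
\zeta_{\a\b} w^\a w^\b = \Hess(H)(0,u)[(v,w),(v,w)] > 0,
\]
since $(v,w) \neq 0$ (its $\b$-component equals $w \neq 0$) and $\Hess(H)(0,u)$ is positive definite. This proves that $\zeta_{\a\b}$ is positive definite.

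There is no real obstacle here: everything reduces to block-matrix bookkeeping plus property (3) of the Minkowski norm and the identification \eqref{eq:gat0gen}. The only mild care needed is to untangle the definition of $h^s_\a$ and use the symmetry of $H_{ab}$ to display $\zeta_{\a\b}$ in manifestly Schur-complement form.
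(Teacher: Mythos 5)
Your proof is correct and follows essentially the same strategy as the paper: both recognise $\zeta_{\a\b}$ as the Schur complement of the tangential block $(H_{ij})=(g_{ij}(0,u))$ in $\Hess(H)(0,u)$ and deduce positive definiteness from property (3) of the Minkowski norm. The only difference is in the linear-algebra detail used to establish positivity of that Schur complement: you give the explicit congruence argument (choosing $v^j=-g^{js}(0,u)H_{s\a}w^\a$ so that the full Hessian on $(v,w)$ evaluates to $\zeta_{\a\b}w^\a w^\b$), whereas the paper first reduces by a rotation of the $y^\a$ coordinates to a single diagonal entry and then identifies it as the ratio of leading principal minors $\det H[n+1]/\det H[n]$; both are standard routes to the same Schur-complement fact.
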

\begin{proof}
From \eqref{eq:Riccoefs} and \eqref{eq:handkappa} we have
$\zeta_{\a\b} = H_{\a\b}-h^s_\a H_{s\b} = H_{\a\b} - g^{sr} H_{r\a} H_{s\b}$. By specifying the coordinates $y^\alpha$ on $\br^{n+p}$ it suffices to prove that $H_{n+1,n+1} - g^{rs} H_{r,n+1} H_{s,n+1} > 0$. But from the cofactor decomposition, the latter expression equals $\det H[n+1]/\det H[n]$, where $H[m]$ denotes the top left $m \times m$ submatrix of $\Hess(H)$.
\end{proof}

\section{Proofs of the theorems}
\label{s:proofs}

\begin{proof}[Proof of Theorem~\ref{th:hyper}]

We start with the following local fact.
{
\begin{proposition} \label{p:hyper}
Let $M^n$ be a regular hypersurface in a Minkowski space $\M^{n+1}$ and let $P \in M^n$. Suppose that the Ricci curvature of the induced Finsler metric on $M^n$ is nonnegative at $P$ and that $\jt(P) \ne 1$. Then for any choice of a Euclidean metric on $\M^{n+1}$, the \emph{(}Riemannian\emph{)} second fundamental form of $M^n$ at $P$ is semidefinite and the \emph{(}Ricci and sectional\emph{)} curvature of the induced Riemannian metric on $M^n$ is nonnegative at $P$.
\end{proposition}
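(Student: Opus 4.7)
The strategy is to reduce both assertions of the proposition to the semidefiniteness of the Hessian $(f_{ij}(0))$, and to prove that semidefiniteness by contradiction. Semidefiniteness of the Riemannian second fundamental form $II$ at $P$ gives the Riemannian conclusion immediately: for a Euclidean hypersurface the Gauss equation expresses each sectional curvature as $\det(II|_\sigma)/\det(g|_\sigma)$, which is nonnegative once $II$ is semidefinite. Moreover this semidefiniteness is an affine invariant, so it is enough to work in the adapted coordinates of Section~\ref{s:pre}, in which $II$ is represented by the Hessian $(f_{ij}(0))$ itself. Suppose, toward contradiction, that $(f_{ij}(0))$ is not semidefinite. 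For a hypersurface $\jt(P)$ is the minimum of the positive and negative inertia indices of $(f_{ij}(0))$, so $\jt(P)\ne 1$ forces $\jt(P)\ge 2$; both indices are then at least $2$, and one can choose a $4$-dimensional subspace $W\subset T_PM^n$ on which $f$ has signature $(2,2)$.

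The key computation is the value of $\Ric$ on null directions of $f$. For $u$ with $\kappa(u)=0$ the last two terms of \eqref{eq:Ric1} vanish outright, and the middle term, using $p=1$ and $\kappa=0$, collapses to $-\zeta(u)\,g^{il}(0,u)\,v_l v_i$ with $v_l=f_{jl}(0)u^j$. Setting $C(u):=f_{jls}\,u^j u^l u^s$, we obtain
\[
    \Ric(u) \;=\; \xi(u)\,C(u) \;-\; \zeta(u)\,g^{il}(0,u)\,v_l v_i .
\]
Thus it suffices to produce a nonzero $u^*\in W$ with $\kappa(u^*)=C(u^*)=0$ and $u^*\notin \ker f$.

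The hard part is killing the cubic term $\xi(u)\,C(u)$, and this is where the assumption $p_+,p_-\ge 2$ is used. In $W\cong\br^4$ with $f|_W$ of signature $(2,2)$ the null cone is $\{(x,y)\in\br^2\times\br^2 : |x|=|y|\}$, which minus the origin is diffeomorphic to $(0,\infty)\times S^1\times S^1$ and in particular connected. Since $C(-u)=-C(u)$, either $C|_W$ vanishes identically on this null cone, or else it takes both signs on it and must vanish somewhere on it by connectedness and continuity; in either case such a $u^*$ exists. Nondegeneracy of $f|_W$ forces $W\cap\ker f=0$, so $v(u^*)\ne 0$; together with $\zeta>0$ (Lemma~\ref{l:Habpos}) and the positive definiteness of $(g^{ij})$ this yields $\Ric(u^*)=-\zeta\,g^{il}v_l v_i<0$, contradicting hypothesis~(a) and completing the proof.
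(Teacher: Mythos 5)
Your proposal is correct and follows essentially the same route as the paper: restrict to the isotropic cone of $\kappa$ (the paper works in $\Lc(P)^\perp$, you inside a $4$-dimensional subspace $W$ of signature $(2,2)$), use connectedness of that cone (a consequence of $\jt(P)\ge2$) together with the oddness of the cubic $C$ to find $u^*$ with $\kappa(u^*)=C(u^*)=0$, and conclude from $\Ric(u^*)=-\zeta(u^*)\,g^{il}v_lv_i$ and Lemma~\ref{l:Habpos}. The only cosmetic difference is that you fix a concrete $(2,2)$-subspace and describe its null cone explicitly, while the paper phrases the topological step as choosing a path from $v$ to $-v$ in $C_*$; both make the same use of the intermediate value theorem.
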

\begin{proof}
  Choose the Cartesian coordinates $y^1, \dots, y^n, y^{n+1}$ on $\M^{n+1}$ as in Section~\ref{s:pre} and introduce the Euclidean metric on $\br^{n+1}$ in such a way that they are orthonormal. From \eqref{eq:Ric1} (dropping the indices $\a$ and $\b$, as $p=1$), the Ricci curvature in the direction $u=(u^i)$ at $P$ is given by
  \begin{equation} \label{eq:Richyper}
    \Ric(u) = \xi(u) f_{jls} u^j u^l u^s + \kappa(u) \phi(u) - \zeta(u) g^{il}(u) f_{jl} f_{ir} u^j u^r,
  \end{equation}
  where $\phi(u) = -\eta(u) \kappa(u) - \rho^i(u) f_{il} u^l + \zeta(u) g^{il} f_{il}$. Note that the matrix of the second fundamental form of $M^n$ at $P$ relative to the unit normal $\nu=(0, \dots, 0, 1)$ is given by $(f_{ij})$ and that $\kappa(u)=f_{ij}u^iu^j$ is the normal curvature of $M^n$ at $P$ in the direction of $u$. Let $C$ be the isotropic cone in the orthogonal complement to the null-space $\Lc(P)$, that is, $C=\{u \, | \, \kappa(u)=0, \, u \perp \Lc(P)\}$ and let $C_*=C \setminus \{0\}$.

  Suppose that $\jt(P) > 1$. Then the set $C_*$ is nonempty and connected, and so on a curve $\gamma \subset C_*$ which joins an arbitrary $v \in C_*$ to $-v \in C_*$, there exists a point $u$ such that $f_{jls} u^j u^l u^s=0$. From \eqref{eq:Richyper} it follows that $\zeta(u) g^{il}(u) f_{jl} f_{ir} u^j u^r \le 0$. But $\zeta(u) > 0$ by Lemma~\ref{l:Habpos}. It follows that $f_{jl} u^j=0$, so that $u \in \Lc(P)$, a contradiction.

  Therefore $\jt(P) = 0$, and so the second fundamental form at the point $P$ is semidefinite. This property does not depend on the choice of the Euclidean metric on $\br^{n+1}$ and moreover, by the Gauss equation, implies that both the sectional and the Ricci curvature of $M^n$ at $P$ are nonnegative.
\end{proof}
}

% E means Euclidean: either say before or remove completely: R % maybe \E ?
% refs: pages; bysame; english translation?
Returning to the proof of the theorem, choose an arbitrary Euclidean metric on $\br^{n+1}$. As the induced Riemannian metric on the hypersurface $M^n$ has   nonnegative curvature and as $M^n$ contains a line, it is isometric to the Riemannian product $M^{n-1} \times \E^1$, with the line being $Q \times \E^1$ for some point $Q \in M^{n-1}$ \cite{Top}. The proof is now concluded by the following Lemma (note that its hypothesis imposes no restrictions on the curvature).

\begin{lemma*}[\cite{Borline}, {\cite[Lemma~3.3.2]{BorUMN}}] %\label{l:line}
Suppose a complete Riemannian manifold $N^n$ is isometric to the metric product $N^{n-k} \times \E^k$, and let $\iota: N^n \to \E^{n+p}$ be an isometric $C^0$-immersion. If for some $Q \in N^{n-k}$ the image $\iota(Q \times \E^k)$ is a $k$-dimensional affine subspace of $\E^{n+p}$, then $\iota(N^n)$ is a cylinder with a $k$-dimensional generatrix. \null\hfill\qedsymbol
\end{lemma*}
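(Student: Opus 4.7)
My plan is to show that for every $Q' \in N^{n-k}$ the image $\iota(\{Q'\} \times \E^k)$ is a $k$-dimensional affine subspace of $\E^{n+p}$ parallel to $\iota(\{Q\} \times \E^k)$. Once this is established, the union of such parallel affine $k$-subspaces as $Q'$ ranges over $N^{n-k}$ is, by definition, a cylinder with $k$-dimensional generatrix.

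Choose coordinates on $\E^{n+p}$ so that $\iota(\{Q\} \times \E^k) = \E^k \times \{0\} \subset \E^k \oplus \E^{n+p-k}$ and $\iota(Q,q) = (q,0)$. (The restriction of the $C^0$-isometric immersion $\iota$ to the affine fiber is a length-preserving locally injective map from $\E^k$ into an affine $k$-subspace; standard covering arguments upgrade this to a global Euclidean isometry, so such a normalization is available.) Fix $Q' \in N^{n-k}$, set $D := d_{N^{n-k}}(Q, Q')$, and write $\iota(Q', q) = (q + a(q), b(q))$. Since $d_{N^n}((Q,q),(Q',q)) = D$ uniformly in $q$, non-expansion of $\iota$ gives the tube bound
\[
|a(q)|^2 + |b(q)|^2 \le D^2 \quad \text{for all } q \in \E^k,
\]
so each fiber image stays in a $D$-tube around the affine reference subspace $\E^k \times \{0\}$.

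To upgrade this to affineness of the fiber image I would exploit the full isometric structure by restricting $\iota$ to a flat strip $\gamma \times \E^k$, where $\gamma$ is a minimising geodesic in $N^{n-k}$ from $Q$ to $Q'$. This strip is isometric to the flat Euclidean rectangle $[0,D] \times \E^k$, and $\iota$ restricts to a $C^0$-isometric immersion of it into $\E^{n+p}$ with the side $\{0\} \times \E^k$ embedded as an affine $k$-subspace. Classical rigidity of isometric immersions of flat Euclidean strips into Euclidean space (of Cohn--Vossen / Hartman--Nirenberg type, as used essentially in \cite{Borline} and \cite[Lemma~3.3.2]{BorUMN}) then forces the immersion to be a flat $(k+1)$-rectangle; in particular the opposite side $\{D\} \times \E^k$ maps onto an affine $k$-subspace parallel to $\E^k \times \{0\}$. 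Running this argument for every $Q' \in N^{n-k}$ yields the required parallel structure.

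The main obstacle is precisely this strip-rigidity step, which is genuinely where the product structure of $N^n$ enters: the fiberwise tube constraint alone is too weak, as a $C^0$- (or even smooth) isometric immersion of $\E^k$ into a bounded tube around an affine $k$-subspace need not be affine -- one may bend $\E^k$ into a slim generalised cylinder inside the tube. Coherence across fibers (i.e., $\iota$ being isometric on horizontal curves as well) is what rules out such bending. In the $C^2$ category this is captured by the Codazzi equations together with the vanishing of the second fundamental form along the affine boundary fiber; at the $C^0$ level assumed here, one needs the more delicate metric rigidity of flat strips furnished by the cited references.
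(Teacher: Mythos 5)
The paper does not prove this Lemma at all: it is quoted as a black box, attributed to \cite{Borline} and \cite[Lemma~3.3.2]{BorUMN} (note the inline $\square$ after the statement and the suppressed outer QED). There is therefore no in-paper argument to compare yours against, and your proposal can only be judged on its own merits. Your reduction to the flat strip $\gamma\times\E^k\cong[0,D]\times\E^k$, together with the normalisation and the $D$-tube bound, is a sensible way to isolate where the rigidity must come from, and you are right that the tube bound alone cannot finish the job.

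The problem is the rigidity statement on which you then rely. The assertion that an isometric immersion of $[0,D]\times\E^k$ with one affine face must be ``a flat $(k+1)$-rectangle'' is simply false. Already for $k=1$, $p=1$, the smooth isometric embedding $\iota(s,t)=(h(s),g(s),t)$ of $[0,D]\times\br$ into $\br^3$, with $(h')^2+(g')^2\equiv1$, sends $\{0\}\times\br$ to a straight line, yet its image is a generalised cylinder over the arbitrary arc $s\mapsto(h(s),g(s))$ -- not a planar rectangle. What is true, and what the Lemma actually needs, is the genuinely weaker statement that each \emph{fiber} $\{s\}\times\E^k$ maps onto an affine $k$-plane and that these planes are mutually parallel, while the $(k+1)$-dimensional strip itself is free to bend; conflating planarity of the strip with affineness of its fibers is precisely the difference between a (false) planarity theorem and the cylindricity theorem being quoted. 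Moreover, even the corrected fiber-rigidity claim is not proved in your write-up but only deferred -- to ``Cohn--Vossen / Hartman--Nirenberg type'' results (Hartman--Nirenberg is a \emph{hypersurface} theorem and does not cover arbitrary codimension $p$) and, circularly, to the very references \cite{Borline}, \cite[Lemma~3.3.2]{BorUMN} from which the Lemma itself is quoted. In the $C^0$ regularity class this step is not folklore: Nash--Kuiper-type $C^1$-flexibility must somehow be excluded by the precise meaning of ``$C^0$-immersion'' used in those references. So the proposal is a reduction of the $n$-dimensional lemma to its $(k+1)$-dimensional special case, with the base case left to the same citations, rather than a proof.
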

\renewcommand{\qedsymbol}{}
\end{proof}

\begin{proof}[Proof of Theorem~\ref{th:codim2}]
We use the following local fact.
{
\begin{proposition} \label{p:codim2}
Let $M^n$ be a regular submanifold in a Minkowski space $\M^{n+2}$ and let $P \in M^n$. Suppose that the Ricci curvature of the induced Finsler metric on $M^n$ is nonnegative at $P$. Then $\jt(P) \le 2$.
\end{proposition}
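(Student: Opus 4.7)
My plan is to adapt the argument of Proposition~\ref{p:hyper} to codimension two. Assume for contradiction that $\jt(P) \ge 3$; I will exhibit a tangent direction $u$ at $P$ with $\Ric(u) < 0$.

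By Lemma~\ref{l:Habpos} the $2 \times 2$ matrix $\zeta_{\a\b}$ is positive definite, so a linear change of the transverse coordinates $y^{n+1}, y^{n+2}$ replaces each pair $(f^{n+1}, f^{n+2})$ and $(\kappa^{n+1}, \kappa^{n+2})$ by an invertible linear combination of itself and makes $\zeta_{\a\b} = \delta_{\a\b}$, leaving $\Lc(P)$ and the common isotropic set
\[ C = \{u \in \Lc(P)^\perp : \kappa^{n+1}(u) = \kappa^{n+2}(u) = 0\} \]
unchanged. For $u \in C$ the last two terms in \eqref{eq:Ric1} vanish (each contains a factor $\kappa^\a$), as does the subterm $\zeta_{\a\b}(g^{il}f^\b_{il})\kappa^\a$ of the middle term; after the normalisation this leaves
\[ \Ric(u) \;=\; \xi_\a(u)\, f^\a_{jls}\, u^j u^l u^s \;-\; \sum_\a g^{il}(u)(f^\a_{jl}u^j)(f^\a_{ir}u^r). \]
The quadratic piece equals $-\sum_\a \|f^\a u\|^2_{g^{-1}(u)}$; it is nonpositive and vanishes iff $u$ lies in the common kernel of $f^{n+1}$ and $f^{n+2}$, i.e.\ iff $u \in \Lc(P)$. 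Hence it is strictly negative for every $u \in C \setminus \{0\}$.

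It remains to find $u \in C \setminus \{0\}$ at which the cubic piece $\xi_\a f^\a_{jls}u^j u^l u^s$ is also nonpositive. Mimicking the hypersurface case, the cleanest route is to make both odd cubic polynomials $F^\a(u) := f^\a_{jls} u^j u^l u^s$, $\a = n+1, n+2$, vanish simultaneously. Under $\jt(P)\ge 3$, applying the type bound to both $\nu$ and $-\nu$ shows that every maximal-rank member of the pencil $\{af^{n+1} + b f^{n+2}\}$ has positive and negative indices of inertia both at least $3$, whence $\dim \Lc(P)^\perp \ge 6$; this is the codimension-two analogue of the signature input used in Proposition~\ref{p:hyper}. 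The set $C \setminus \{0\}$ is then nonempty, symmetric under $u \mapsto -u$, and -- exploiting the mixed-signature structure of the pencil -- sufficiently connected that one can travel along a path in $C \setminus \{0\}$ from some $v$ to $-v$; each odd polynomial $F^\a$ must then change sign along such a path, and a joint intermediate-value argument on the pair $(F^{n+1}, F^{n+2})$, using the positive-dimensional real structure of the common isotropic variety, produces $u \in C \setminus \{0\}$ with $F^{n+1}(u) = F^{n+2}(u) = 0$. At such $u$ the cubic part of $\Ric(u)$ vanishes while the quadratic part is strictly negative, so $\Ric(u) < 0$, contradicting the hypothesis.

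The main obstacle is precisely this last step -- the two-parameter intermediate-value claim on the real common zero set of $\kappa^{n+1}$ and $\kappa^{n+2}$ in $\Lc(P)^\perp$. In the hypersurface case only a single indefinite quadric enters and its connectedness is classical; in codimension two one must analyse the real topology of a complete intersection of two quadrics, and it is here that the strengthened hypothesis $\jt(P) \ge 3$ -- guaranteeing at least three positive and three negative directions in every max-rank pencil member -- is indispensable.
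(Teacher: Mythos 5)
Your overall strategy coincides with the paper's: assume $\jt(P)\ge 3$, locate a nonzero $u\in\Lc(P)^\perp$ with $\kappa^{n+1}(u)=\kappa^{n+2}(u)=0$ and $f^{n+1}_{jls}u^ju^lu^s=f^{n+2}_{jls}u^ju^lu^s=0$, observe from \eqref{eq:Ric1} that at such $u$ one has $\Ric(u)=-\zeta_{\a\b}g^{il}f^\b_{jl}f^\a_{ir}u^ju^r$, and conclude via Lemma~\ref{l:Habpos} that $u\in\Lc(P)$, a contradiction. Your reduction of $\Ric(u)$ on the cone $C$ and your observation that the inertia hypothesis forces both positive and negative index of every maximal-rank member of the pencil to be at least $3$ are both correct and match the paper.

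However, the step you flag as ``the main obstacle'' is a genuine gap, not a deferrable detail, and it is exactly where the substance of the paper's proof lies. For a \emph{single} odd function, a sign change along a path from $v$ to $-v$ in a connected $C_*$ forces a zero by the intermediate value theorem. For \emph{two} odd functions there is no ``joint intermediate-value argument'': one must show that the odd map $\Psi=(F^{n+1},F^{n+2})/|(F^{n+1},F^{n+2})|:C_*\to S^1$ cannot exist, which is a Borsuk--Ulam-type statement. The paper's Lemma~\ref{l:pi1} does precisely this via a $\pi_1$ argument: if $\gamma'$ joins $v$ to $-v$ in $C_*$ and $\gamma=\gamma'\cup(-\gamma')$, then oddness of $\Psi$ forces $[\Psi\circ\gamma]$ to be an odd (hence nontrivial) class in $\pi_1(S^1)=\mathbb{Z}$, which is impossible if $[\gamma]$ has finite order in $\pi_1(C_*)$. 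The required topological input is therefore not merely that $C_*$ is nonempty, symmetric and ``sufficiently connected'' in some positive-dimensional complete intersection, but specifically that $C_*$ is connected and has trivial (or at worst $\mathbb{Z}_2$) fundamental group. Establishing this occupies essentially the entire proof of Lemma~\ref{l:pi1}: the pencil is reduced to a canonical form via \cite[Theorem~9.2]{LR}, perturbed by hand to a generic pencil without changing its type (treating separately the cases $\det(\la_1A_1+\la_2A_2)\equiv0$, Jordan blocks of size $\ge2$ for real and complex eigenvalues, and non-smooth $C\cap S^{n-1}$), and finally the diffeomorphism classification of $C\cap S^{n-1}$ from \cite{GL} is invoked together with the type inequality \eqref{eq:typebi} to check that all spheres appearing have dimension $\ge2$. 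None of this follows from the dimension bound $\dim\Lc(P)^\perp\ge6$ alone, and your sketch does not supply a substitute. As it stands, the proposal correctly reformulates Proposition~\ref{p:codim2} as the algebro-topological Lemma~\ref{l:pi1} but leaves that lemma unproved.
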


The proof of Proposition~\ref{p:codim2} is based on the following algebraic (or rather topological) lemma. % whose proof we postpone to the end of the proof of the theorem.
\begin{lemma} \label{l:pi1}
  Let $\phi_1, \phi_2$ be two quadratic forms and $\psi_1, \psi_2$ be two cubic forms on $\Rn$. Suppose that the positive index of inertia of any linear combination $\la_1\phi_1 + \la_2 \phi_2$ of maximal rank is at least $3$. Then the four forms $\phi_1, \phi_2, \psi_1, \psi_2$ have a common zero on $\Rn \setminus \{0\}$. %real
\end{lemma}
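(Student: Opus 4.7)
The plan is to interpret Lemma~\ref{l:pi1} topologically, via a $\mathbb{Z}_2$-equivariant Krasnoselskii genus estimate for the common zero set of the pencil of quadrics.

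Set $X=\{u\in S^{n-1}:\phi_1(u)=\phi_2(u)=0\}$, a closed subset of the unit sphere in $\Rn$, invariant under the antipodal involution $u\mapsto -u$ (since the $\phi_i$ are even). As the cubic forms $\psi_i$ are antipodally odd, the map $\Psi=(\psi_1,\psi_2)\colon X\to\br^2$ is continuous and satisfies $\Psi(-u)=-\Psi(u)$, and a common zero of all four forms on $\Rn\setminus\{0\}$ is exactly a zero of $\Psi$ on $X$. First, I would observe that, since the hypothesis is symmetric under $(\la_1,\la_2)\mapsto(-\la_1,-\la_2)$, every maximal-rank form in the pencil has both positive and negative index of inertia at least $3$; in particular $n\ge 6$, no maximal-rank pencil element is semidefinite, and a separating-hyperplane argument applied to the joint range $\{(\phi_1(u),\phi_2(u)):u\in S^{n-1}\}\subset\br^2$ (convex by Brickman's theorem, since $n\ge 3$) forces $0$ to lie in this range, so $X\ne\emptyset$.

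The crux is to establish the Krasnoselskii genus bound $\gamma_{\mathbb{Z}_2}(X)\ge 3$. Granting this, if $\Psi$ were nowhere zero on $X$, the normalization $\Psi/\|\Psi\|$ would be an antipodally-equivariant map $X\to S^1\subset\br^2\setminus\{0\}$, forcing $\gamma_{\mathbb{Z}_2}(X)\le 2$, a contradiction; any zero of $\Psi$ then supplies the required common zero. To prove the genus bound, the natural strategy is to exhibit an antipodally-equivariant continuous map $\sigma\colon S^2\to X$: since $\gamma_{\mathbb{Z}_2}(S^2)=3$ by the classical Borsuk--Ulam theorem, the monotonicity of the genus under equivariant maps will then yield $\gamma_{\mathbb{Z}_2}(X)\ge 3$.

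Constructing $\sigma$ is where I expect the main technical difficulty: the bilateral inertia hypothesis must be used in an essential way. The naive attempt to find a $3$-dimensional linear subspace totally isotropic for both $\phi_1$ and $\phi_2$ (which would give an antipodally-invariant $S^2\subset X$ at once, as its unit sphere) fails dimensionally in general, since the expected intersection of the two isotropic Grassmannians inside $\mathrm{Gr}(3,\Rn)$ has negative dimension already when $n=6$. A subtler construction is therefore required: one promising option is to parametrize, as $\la\in\mathbb{RP}^1$ varies, a family of $3$-dimensional $\phi_\la$-isotropic subspaces (which exist by Witt's theorem, since the maximal isotropic dimension equals $\min(p_+(\la),p_-(\la))\ge 3$), and to assemble from this family an antipodally-equivariant $2$-cycle inside $X$ via an equivariant gluing argument controlling the rank-drop locus of the pencil through the bilateral inertia bound.
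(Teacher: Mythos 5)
Your high-level framework is essentially the same as the paper's: reduce to an equivariant topological obstruction by mapping $X=C\cap S^{n-1}$ into $S^1$ via the normalized pair $(\psi_1,\psi_2)$ and invoke a Borsuk--Ulam-type conclusion. The paper phrases this via winding numbers of equatorial loops rather than Krasnoselskii genus, but the two are interchangeable here.

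The genuine gap is precisely where you stop: you never establish the required topological bound on $X$. You correctly identify that a common $3$-dimensional totally isotropic subspace for $\phi_1,\phi_2$ (which would give an invariant $S^2\subset X$ for free) need not exist for a nondegenerate pencil -- a dimension count in $\mathrm{Gr}(3,n)$ rules it out in general -- and you then gesture at assembling ``an antipodally-equivariant $2$-cycle inside $X$ via an equivariant gluing argument controlling the rank-drop locus,'' but this is not a construction. It is exactly the hard part. The paper's proof spends almost all of its effort there, and proceeds quite differently from what you sketch: it first disposes of the singular case $\det(\la_1A_1+\la_2A_2)\equiv 0$ by producing a common $3$-dimensional isotropic subspace directly from the Kronecker canonical form (here, unlike in the generic case, that naive route \emph{does} work); it then perturbs the pencil to a generic one while carefully preserving the ``type $\ge 3$'' condition, splitting Jordan blocks one at a time via explicit small deformations of the canonical pairs in \cite[Theorem~9.2]{LR} and further deforming to the Agrachev canonical form \eqref{eq:can}; finally it reads off from the classification \cite{GL} that $C\cap S^{n-1}$ is connected with $\pi_1$ trivial or $\mathbb{Z}_2$, which is what the winding-number argument needs. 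A compactness argument then passes from the generic approximation back to the original pencil. None of this reduction, perturbation, or appeal to the classification appears in your proposal, and without some substitute for it, the genus bound $\gamma_{\mathbb{Z}_2}(X)\ge 3$ is unproven.

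One smaller remark: your nonemptiness argument via Brickman's convexity theorem is correct and is a nice shortcut, but it is subsumed by (and weaker than) what is actually needed -- connectedness plus control of $\pi_1$, or your genus bound -- so it does not advance the main difficulty.
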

Assuming Lemma~\ref{l:pi1} we give the proof of Proposition~\ref{p:codim2}.
\begin{proof}[Proof of Proposition~\ref{p:codim2}]
  Choose Cartesian coordinates $y^1, \dots, y^{n+2}$ on $\M^{n+2}$ as in Section~\ref{s:pre} and introduce the Euclidean metric on $\br^{n+2}$ in such a way that they are orthonormal. If $\jt(P) > 2$, then applying Lemma~\ref{l:pi1} to the restriction of the quadratic forms $f^\a_{ij} u_iu_j$ and the cubic forms $f^\a_{ijl} u_iu_ju_l, \; \a=n+1,n+2$, to $\Lc(P)^\perp$ we find a nonzero vector $u=(u^i) \in T_PM^n$ orthogonal to $\Lc(P)$ and such that $\kappa^\a(u)(=f^\a_{ij} u_iu_j)=0$ and $f^\a_{ijl} u_iu_ju^l=0$, for $\a=n+1, n+2$. But then from \eqref{eq:Ric1}, the Ricci curvature in the direction $u=(u^i)$ at $P$ is given by $\Ric(u) = - \zeta_{\a\b}g^{il} f^\b_{jl}f^\a_{ir} u^j u^r$. By assumption, $\Ric(u) \ge 0$, and so by Lemma~\ref{l:Habpos} we have $f^\a_{ir} u^r=0$ at $P$, for $\a=n+1, n+2$, and all $i=1, \dots, n$. But then $u \in \Lc(P)$, a contradiction.
\end{proof}
}
The claim of the theorem now follows from Proposition~\ref{p:codim2} and \cite[Theorem~2]{BorMS}.
\end{proof}

% coordinates orthogonal doesn't sound good
\begin{proof}[Proof of Lemma~\ref{l:pi1}]
  Denote $C$ the set (the cone) of common zeros of the quadratic forms $\phi_1$ and $\phi_2$ on $\Rn$, and let $C_*=C\setminus\{0\}$. We use a version of the Borsuk-Ulam Theorem. Let $\gamma':[0,1] \to C_*$ be a path joining two antipodal points and let $\gamma = \gamma' \cup (-\gamma')$ (that is, $\gamma:[0,2] \to C_*$ is defined by $\gamma(t)=\gamma'(t)$ for $t \in [0,1]$ and $\gamma(t)=-\gamma'(t-1)$ for $t \in [1,2]$). If the cubic forms $\psi_1, \psi_2$ have no common zeros on $C_*$, we can define the map $\Psi: C_* \to S^1$ by $\Psi(u)=(\psi_1(u), \psi_2(u))/\sqrt{\psi_1^2(u) + \psi_2^2(u)}$. Note that $\Psi$ maps antipodal points to antipodal points, and so the loop $\Psi(\gamma)$ represents a nontrivial (odd) element in $\mathbb{Z}=\pi_1(S^1)$. If we can construct the path $\gamma'$ in such a way that $\gamma$ represents an element of a finite order in $\pi_1(C_*)$, we get a contradiction (in fact, in the proof below, our $\gamma$ will always be contractible on $C_*$, with a single exception).

  The topology of $C_*$ (more precisely, of its intersection with the unit sphere $S^{n-1}$) is completely described when the pencil $\la_1\phi_1 + \la_2 \phi_2$ is \emph{generic} \cite{GL}. There are three conditions which define a generic pencil. Relative to some Cartesian coordinates on $\Rn$, the forms $\phi_1, \phi_2$ are represented by symmetric matrices $A_1, A_2$ respectively. The first condition is that $\det(\la_1 A_1 + \la_2 A_2)$ is not identically zero (and so we can assume that the matrix $A_2$ is nonsingular). The second condition is that the matrix $A_2^{-1}A_1$ is semisimple (is diagonalisable over $\bc$). The third condition is that $C \cap S^{n-1}$ is a smooth manifold; this is equivalent to the fact that for no $u \in C_*$, the vectors $A_1u$ and $A_2u$ are linear dependent. It is easy to see that by a small perturbation, any pencil can be made generic; however, in our proof we have to be a little more careful not to violate the inertia assumption of the lemma. We call the \emph{type} of a pencil of quadratic forms (or of the pencil of associated symmetric matrices relative to some basis) the minimum of the positive index of inertia of all its elements of the maximal rank. We will first show that any pencil of type at least $3$ can be arbitrarily close approximated by a generic pencil of type at least $3$. Then using the result of \cite{GL} and the arguments in the first paragraph we will show that for every such generic pencil $\la_1\phi'_1 + \la_2 \phi'_2$, the forms $\phi'_1, \phi'_2, \psi_1$ and $\psi_2$ have a common zero on the unit sphere $S^{n-1} \subset \Rn$. Then the claim of the lemma follows by compactness.

  Without loss of generality we can assume that the common null-space of $\phi_1$ and $\phi_2$ is trivial (as otherwise we can restrict all four forms $\phi_1, \phi_2, \psi_1, \psi_2$ to its orthogonal complement). Furthermore, using the canonical form of a pencil of symmetric matrices given in \cite[Theorem~9.2]{LR} we obtain that the condition $\det(\la_1 A_1 + \la_2 A_2) \equiv 0$ implies that $\Rn$ splits into the direct sum of subspaces $V$ and $V'$ orthogonal with respect to both $\phi_1$ and $\phi_2$, with $\dim(V)=2m+1, \; m \ge 1$, and such that the restriction of $\la_1A_1+\la_2A_2$ to $V$ relative to a particular basis for $V$ is given by $Q(\la_1,\la_2)=\left(\begin{smallmatrix} 0 & T(\la_1,\la_2) \\ T^t(\la_1,\la_2) & 0 \end{smallmatrix}\right)$, where $T(\la_1,\la_2)$ is the $m \times (m+1)$ matrix of the form % or la_1T_1+\la_2T_2?
  \begin{equation*}
    T(\la_1,\la_2)=\left(
                     \begin{array}{cccc}
                       0 &  & \la_1 & \la_2 \\
                        & \iddots & \iddots &  \\
                       \la_1 & \la_2 &  & 0 \\
                     \end{array}
                   \right).
  \end{equation*}
  Note that the pencil $Q(\la_1,\la_2)$ has an isotropic subspace $V_0$ of dimension $m+1$, which is a common isotropic subspace of the forms $\phi_1$ and $\phi_2$. If $m \ge 2$, then $\dim V_0 \ge 3$. Suppose $m=1$. As the positive index of inertia of all nonzero elements of the pencil $Q(\la_1,\la_2)$ is $1$, the restriction of the pencil $\la_1\phi_1 + \la_2\phi_2$ to $V'$ has type at least $2$. Then $\dim V' \ge 4$, and so by \cite[Theorem~11.5(ii)]{LR}, the forms $\phi_1$ and $\phi_2$ have a common zero $u \in V' \setminus \{0\}$, and hence a common $3$-dimensional isotropic subspace $V_0 \oplus \br u$. So in all the cases, $\phi_1$ and $\phi_2$ have a common isotropic subspace of dimension $3$. The unit sphere $S^2$ in that subspace is a subset of $C_*$. The claim of the lemma now follows from the argument in the first paragraph of the proof if we take for $\gamma$ the equator of $S^2$.

  We can therefore assume that for our pencil, $\det(\la_1 A_1 + \la_2 A_2)$ does not vanish identically, and so without loss of generality we can assume that $\det A_2 \ne 0$. We can then consider the real Jordan form of the matrix $A_2^{-1}A_1$. The Jordan cells determine the decomposition of $\Rn$ into the direct sum of subspaces $V_j$ which are pairwise orthogonal with respect to both $\phi_1$ and $\phi_2$, and such that for each subspace $V_j$, there is a basis relative to which the restrictions of the forms $\phi_1$ and $\phi_2$ are represented by a canonical pair of matrices, as given in \cite[Theorem~9.2]{LR}. These canonical pairs come in two forms, depending on whether the Jordan cell corresponds to a real eigenvalue or to a pair of complex-conjugate eigenvalues of the matrix $A_2^{-1}A_1$.

  For a subspace $V_j$ of dimension $m \ge 1$ corresponding to a real eigenvalue $\a$, the canonical form is given by the pencil of $m \times m$ symmetric matrices
  \begin{equation*}
    \la_1 Q_1 + \la_2 Q_2 =\eta ( \la_1 \left(
                     \begin{array}{cccc}
                       0 &  & 1 & \a\\
                        & \iddots & \iddots &  \\
                       1 & \iddots &  &  \\
                       \a& &  & 0 \\
                     \end{array}
                   \right) + \la_2
                   \left(
                     \begin{array}{cccc}
                       0 &  & 0 & 1\\
                        & \iddots & \iddots &  \\
                       0 & \iddots &  &  \\
                       1 & &  & 0 \\
                     \end{array}
                   \right) ),
  \end{equation*}
  where $\eta = \pm 1$. Now if $m \ge 3$, we replace the top-right and the bottom-left $\a$ in $Q_1$ by $\a+\ve$, with a small $\ve$. It is easy to see that the inertia of the resulting pencil does not change (outside a finite set of points on the circle $\la_1^2 + \la_2^2=1$), but the matrix $Q_2^{-1}Q_1$ will now have two different eigenvalues, $\a$ and $\a+\ve$, so that the pencil $\la_1 Q_1 + \la_2 Q_2$ splits into the direct sum of two smaller pencils of the same form. If $m=2$, we replace the bottom-right zero in $Q_1$ by $-\ve$, with a small $\ve>0$. Again, the inertia of the resulting pencil is the same (outside a finite set of points on the circle $\la_1^2 + \la_2^2=1$), but the resulting matrix $Q_2^{-1}Q_1$ has two non-real complex-conjugate eigenvalues, $\a \pm \mathrm{i}\sqrt{\ve}$. So by an arbitrarily small perturbation without changing the type we can obtain a pencil such that all the subspaces $V_j$ corresponding to the real eigenvalues of the matrix $A_2^{-1}A_1$ are $1$-dimensional. The restrictions of the (perturbed) forms $\phi_1, \phi_2$ to the sum of these subspaces are simultaneously diagonalisable over $\br$.

  For a subspace $V_j$ of dimension $2m, \; m \ge 1$, corresponding to the pair of complex-conjugate eigenvalues $\rho \pm \mathrm{i} \nu$ of $A_2^{-1}A_1$, the canonical form is given by the pencil of $2m \times 2m$ symmetric matrices
  \begin{equation*}
    \la_1 Q_1 + \la_2 Q_2 = \la_1 \left(
                     \begin{array}{cccc}
                       0 &  & K & L\\
                        & \iddots & \iddots &  \\
                       K & \iddots &  &  \\
                       L & &  & 0 \\
                     \end{array}
                   \right) + \la_2
                   \left(
                     \begin{array}{cccc}
                       0 &  & 0 & K\\
                        & \iddots & \iddots &  \\
                       0 & \iddots &  &  \\
                       K & &  & 0 \\
                     \end{array}
                   \right),
    \quad
    \begin{aligned}
    K&=\left(
        \begin{array}{cc}
          0 & 1 \\
          1 & 0 \\
        \end{array}
      \right), \\
    L&=\left(
        \begin{array}{cc}
          \nu & \rho \\
          \rho & -\nu \\
        \end{array}
      \right).
  \end{aligned}
  \end{equation*}
  If $m \ge 3$, we replace $\rho$ by $\rho+\ve$ in the top-right and the bottom-left blocks $L$ in $Q_1$, with a small $\ve$. This does not change the inertia of the pencil, but the matrix $Q_2^{-1}Q_1$ for the resulting pencil will have two different pairs of complex-conjugate eigenvalues, $\rho \pm \mathrm{i} \nu$ and $\rho+\ve \pm \mathrm{i} \nu$, and so the pencil $\la_1 Q_1 + \la_2 Q_2$ splits into the direct sum of two smaller pencils of the same form. If $m=2$, we replace the bottom-right $2 \times 2$ zero block in $Q_1$ by $\ve I_2$, with a small $\ve$. It is not hard to see that the inertia of the resulting pencil is the same, but the resulting matrix $Q_2^{-1}Q_1$ has two different pairs of complex-conjugate eigenvalues, $\rho \pm \mathrm{i}\sqrt{\nu^2 + \ve}$ and $\rho \pm \mathrm{i}\sqrt{\nu^2 - \ve}$. Then the pencil $\la_1 Q_1 + \la_2 Q_2$ splits into the direct sum of two pencils of the same form, with $m=1$.

  So by an arbitrarily small perturbation of the given pencil, we can obtain a pencil of the same type such that the matrix $A_2^{-1}A_1$ is semisimple (over $\bc$), and we can chose coordinates $(u_i, v_t, w_t)$ on $\Rn$ such that the quadratic forms $\phi_1$ and $\phi_2$ are given by
  \begin{equation} \label{eq:subcan}
    \phi_1 = \sum_{i=1}^r \a_i u_i^2 + \sum_{t=1}^s (\nu_t (v_t^2 -w_t^2) + 2\rho_t v_tw_t), \quad \phi_2 = \sum_{i=1}^r \beta_i u_i^2 + \sum_{t=1}^s 2v_tw_t,
  \end{equation}
  where $r, s \ge 0, \, r+2s=n, \; \beta_i \ne 0$ and $\nu_t > 0$.

  Next, we want $C \cap S^{n-1}$ to be a smooth manifold. This is equivalent to the fact that for no $u \in C_*$ the vectors $A_1u$ and $A_2u$ are linear dependent, which is equivalent to the fact that the origin of $\br^2$ does not belong to the convex hull of any two vectors $(\a_i, \b_i)$ and $(\a_j, \b_j), \; 1 \le i, j \le r$. Assuming this condition is violated, we can replace $\phi_1$ by $\phi_1 + 2 \ve u_iu_j$ for some small $\ve$, without changing $\phi_2$; it is easy to check that the type of the pencil does not change (but note that $r$ decreases by $2$ and $s$ increases by $1$). Repeating this procedure if necessary we can assume that the quadratic forms $\phi_1$ and $\phi_2$ are given by \eqref{eq:subcan}, and for no two $i, j$, the origin of $\br^2$  belongs to the convex hull of the vectors $(\a_i, \b_i)$ and $(\a_j, \b_j)$.

  We will now continuously deform the coefficients $\a_i, \b_i, \nu_t, \rho_t$ in \eqref{eq:subcan}, without violating the above condition on the pairs of vectors $(\a_i, \b_i)$ and $(\a_j, \b_j)$ and keeping $\nu_t$ positive. Note that the inertia of the individual elements of the pencil may change in the process of deformation (for example, we may create or destroy multiple eigenvalues of $A^{-1}_2A_1$), but it is not hard to see that the type of the pencil remains the same. Moreover, the diffeomorphism type of the manifold $C \cap S^{n-1}$ does not change, and we can reduce the forms $\phi_1$ and $\phi_2$ to the following canonical forms \cite[Theorem~1]{AG}:
  \begin{equation} \label{eq:can}
    \phi_1 = \sum_{j=1}^{2l-1} \Bigl(\cos \tfrac{2\pi j}{2l-1} \sum_{i_j=1}^{n_j} u_{i_j}^2\Bigr) + \sum_{t=1}^s (v_t^2 -w_t^2), \quad
    \phi_2 = \sum_{j=1}^{2l-1} \Bigl(\sin \tfrac{2\pi j}{2l-1} \sum_{i_j=1}^{n_j} u_{i_j}^2\Bigr) + \sum_{t=1}^s 2v_tw_t,
  \end{equation}
  where $r = \sum_{j=1}^{2l-1} n_j \ge 0, \, s \ge 0, \, r+2s=n$, and either $l=0$ (we then take $r=0$), or $l > 0$ and then $n_j \ge 1$, for all $j = 1, \dots, 2l-1$. If $l \ge 2$, we introduce the numbers $d_j=n_j + \dots + n_{j+l-2}, \; j=1, \dots, 2l-1$, where the indices are computed modulo $2l-1$. Then the type of the pencil defined by $\phi_1$ and $\phi_2$ equals $s+\min(d_j)$ when $l \ge 2$, and equals $s$ when $l \le 1$. Therefore by our assumption, we have the following inequalities:
  \begin{equation}\label{eq:typebi}
    s \ge 3, \quad \text{when } l \le 1; \qquad \qquad s+d_j \ge 3, \; j=1, \dots, 2l+1, \quad \text{when } l \ge 2.
  \end{equation}

  By \cite{GL} the submanifold $C \cap S^{n-1}$ is diffeomorphic to
  \begin{enumerate}[(i)]
    \item
    the unit tangent bundle of $S^{s-1}$, if $r = 0, \, s > 1$;

    \item
    the product $S^{s-1} \times S^{r+s-2}$, if $r > 0, \, l=1, \, s > 0$;

    \item
    the product $S^{n_1-1} \times S^{n_2-1} \times S^{n_3-1}$, if $r > 0, \, l = 2, \, s = 0$;

    \item
    the connected sum $\#_{j=1}^{2l-1} (S^{d_j+s-1} \times S^{r-d_j+s-2})$, if $r > 0, \, l \ge 2, \, l + s > 2$
  \end{enumerate}
  (there is also a case $C \cap S^{n-1} = \varnothing$ when $r=0, \; s \le 1$, or when $r > 0, \; l=1, \; s = 0$, which may not occur with our assumption on the type). But then from inequalities \eqref{eq:typebi} we can easily see that all the spheres in the above classification are of dimension at least $2$, and so $C \cap S^{n-1}$ is always connected and is simply connected in the last three cases, and $\pi_1(C \cap S^{n-1})=\mathbb{Z}_2$ in the first case. This concludes the proof of the lemma.
\end{proof}

Note that for the proof, we need much less than the diffeomorphism type of $C \cap S^{n-1}$: it is sufficient to show that the condition on the type of the pencil implies that $C_*$ is connected and that its first Betti number is zero. Instead of the classification theorem of \cite{GL} one could have used the results of \cite[\S2]{Kr}; however, we cannot see how to avoid the linear-algebraic ``preparation".

In the Riemannian case, the type of a point of a submanifold of an arbitrary codimension in a Euclidean space at which the Ricci curvature is nonnegative is $0$ \cite[Lemma~3.3.1]{BorUMN}. This is no longer true in the Minkowski settings: the possibility $\jt(P)=1$ realises already for a two-dimensional surface of positive flag curvature in $\M^3$ (as in the example in Section~\ref{s:counter}). On the other hand, by Proposition~\ref{p:hyper} and Proposition~\ref{p:codim2}, for $p=1, 2$, the type of a point of a submanifold of codimension $p$ of a Minkowski space at which the Ricci curvature is nonnegative is at most $p$. It may be interesting to ask what happens in higher codimension. In that case, constructing the canonical form of a pencil by changing a basis is hopeless, but one may use the spectral sequence from \cite[Theorem~1]{AV} to compute the homology of the common set of zeros of the quadratic forms.

\begin{proof}[Proof of Theorem~\ref{th:ruled}]
  Let $P \in M^n$. Choose Cartesian coordinates $y^1, \dots, y^{n+p}$ on $\M^{n+p}$ as in Section~\ref{s:pre} and introduce the Euclidean metric on $\br^{n+p}$ in such a way that they are orthonormal. In a direction $u=(u^i)$ tangent to a $k$-dimensional generatrix of $M^n$ passing through $P$ (there may be more than one such generatrix) we have $\kappa^\a(=f^\a_{ij}u^iu^j)=0$ and $f^\a_{ijl}u^iu^ju^l=0$ at $P$, for all $\a=n+1, \dots, n+p$. Then from \eqref{eq:Ric1}, the Ricci curvature in the direction $u=(u^i)$ at $P$ is given by $\Ric(u) = -\zeta_{\a\b} g^{il} f^\b_{jl}f^\a_{ir} u^j u^r$. By assumption, $\Ric(u) \ge 0$, and so by Lemma~\ref{l:Habpos} we have $f^\a_{ir} u^r=0$ at $P$, for all $\a=n+1, \dots, n+p$ and all $i=1, \dots, n$. It follows that the tangent space to the $k$-dimensional generatrix passing through $P$ lies in the null-space $\Lc(P)$.
\end{proof}

% remove one theorem: just prop?
% corollary?
\begin{proof}[Proof of Theorem~\ref{th:2dim}]
  By Theorem~\ref{th:ruled} we have $\mu(P) \ge 1$, for all $P \in M^2$. Choose an arbitrary Euclidean metric on $\br^{2+p}$. Then the induced Riemannian metric on $M^2$ is flat and complete, and so the claim follows from~\cite[Theorem~1.1]{Har}.
\end{proof}

\section{Example}
\label{s:counter}

We show that there exists a smooth Minkowski metric on $\br^3$, a smooth surface $M^2 \subset \br^3$ and a point $P \in M^2$ such that:

\begin{itemize}
  \item
  the Gauss curvature of $M^2$ at $P$ is negative (relative to any choice of a Euclidean metric on $\br^3$);
  \item
  the flag curvature of $M^2$ at $P$, with any flagpole, is nonnegative (even positive).
\end{itemize}

Define the Minkowski function $F=F(y^1,y^2,y^3)$, where $y^i, \, i=1,2,3$, are Cartesian coordinates on $\br^3$ in such a way that $H=\frac12 F^2$ is given by
\begin{equation} \label{eq:H}
\begin{split}
    H(y^1,y^2,y^3) &=A((y^1)^2+(y^2)^2) + \ve_1 y^3 \, \frac{3y^2(y^1)^2-(y^2)^3}{(y^1)^2+(y^2)^2} \\
      &+(y^3)^2 \Big(B + \ve_2 \frac {(y^1)^6 - 15 (y^1)^4 (y^2)^2 + 15 (y^1)^2 (y^2)^4 - (y^2)^6}{((y^1)^2+(y^2)^2)^3}\Big),
\end{split}
\end{equation}
where $A$ and $B$ are large positive constants, and $\ve_1$ and $\ve_2$ are small positive constants; relative to the cylindrical coordinates $r, \theta, z$, where $y^1=r \cos \theta, \, y^2=r \sin \theta, \, y^3=z$, the function $H$ is given by
\begin{equation*}
    H(r, \theta, z) = Ar^2 + \ve_1 z r \sin(3\theta)+ z^2 (B+ \ve_2 \cos(6\theta)).
\end{equation*}
%It is easy to see that $F=\sqrt{2H}$ is a Minkowski norm on $\br^3$ provided $\ve_1, \ve_2 > 0$ are small enough.
Note that the Minkowski metric so defined is reversible.

Define the surface $M^2$ by $y^3=f(x^1,x^2), \; y^1=x^1, \; y^2=x^2$, with $f$ a smooth function satisfying $f(0,0)=f_i(0,0)=0$ and take $P$ to be the origin $x^1=x^2=0$.

From \eqref{eq:Ric1} and \eqref{eq:Riccoefs}, the flag curvature of the surface $M^2$ at the point $P$ with the flagpole $u=(u^1,u^2)$ is given by
\begin{equation}\label{eq:ricco}
\begin{split}
\Ric(u) & = \frac {2 \ve_1 u^2 (3 (u^1)^2 -(u^2)^2)}{A((u^1)^2+(u^2)^2)^2} f_{ijk} u^i u^j u^k
+ \frac{P(u)}{4 A^2 ((u^1)^2+(u^2)^2)^2} \, (f_{11} f_{22}-f_{12}^2) \\
& + \frac{Q_{11}(u)f_{11}+Q_{12}(u)f_{12}+Q_{22}(u)f_{22}}{A^2 ((u^1)^2+(u^2)^2)^4} \, (f_{11}(u^1)^2 + 2 f_{12} u^1 u^2 + (u)f_{22}(u^2)^2),
\end{split}
\end{equation}
where
\begin{align*}
P(u) &=(4 A B + 4 A \ve_2 - 9 \ve_1^2) (u^1)^6 + 3 (4 A B - 20 A \ve_2 + 15 \ve_1^2) (u^1)^4 (u^2)^2 \\
& \quad + 3 (4 A B + 20 A \ve_2 - 25 \ve_1^ 2) (u^1)^2 (u^2)^4 + (4 A B - 4 A \ve_2 - \ve_1^2) (u^2)^6,\\
Q_{11}(u) &= 6 (u^1)^2 ((3 A \ve_2-3 \ve_1^2) (u^1)^6+(33 \ve_1^2-51 A \ve_2) (u^1)^4(u^2)^2 \\
& \quad +(65 A \ve_2-65 \ve_1^2) (u^1)^2 (u^2)^4+(11 \ve_1^2-9 A \ve_2) (u^2)^6),\\
Q_{22}(u) &= 3 (u^2)^2 ((18 A \ve_2-27 \ve_1^2) (u^1)^6+(115 \ve_1^2-130 A \ve_2) (u^1)^4 (u^2)^2 \\
& \quad +(102 A \ve_2-81 \ve_1^2) (u^1)^2 (u^2)^4+(\ve_1^2-6 A \ve_2) (u^2)^6),\\
Q_{12}(u) &= 3 u^1 u^2 ((24 A \ve_2-33 \ve_1^2) (u^1)^6+(181 \ve_1^2-232 A \ve_2) (u^1)^4 (u^2)^2\\
& \quad +(232 A \ve_2-211 \ve_1^2) (u^1)^2 (u^2)^4+(23 \ve_1^2-24 A \ve_2) (u^2)^6).
\end{align*}

We now choose the third derivatives $f_{ijk}$ at zero in such a way that
\begin{equation*}
f_{111} (u^1)^3 + 3 f_{112} (u^1)^2 u_2 + 3 f_{122} u^1 (u^2)^2 + f_{222} (u^2)^3= C u^2 (3 (u^1)^2-(u^2)^2),
\end{equation*}
where $C$ is a very large positive number. Then the first term in \eqref{eq:ricco} is large and positive everywhere except for on the three lines $\br u, \; u=(1,0), (1, \pm \sqrt{3})$, where it is zero. It therefore suffices to choose the second derivatives $f_{ij}$ at zero in such a way that the sum $T(u)$ of the other two terms in \eqref{eq:ricco} is positive at these three points and that $f_{11} f_{22}-f_{12}^2 < 0$. We have
\begin{align*}
    T(1,0) & = \frac{1}{4 A^2}(a_1(f_{11} f_{22}-f_{12}^2) + 72 a_2 f_{11}^2), \\
    T(1,\pm\sqrt{3})&= \frac{1}{2 A^2} (2 a_1 (f_{11} f_{22} - f_{12}^2) + 9 a_2 (f_{11} + 3 f_{22} - 2 \sqrt{3} f_{12})^2),
\end{align*}
where $a_1=4 A B + 4 A \ve_2-9 \ve_1^2, \; a_2= A \ve_2 - \ve_1^2$. Take $f_{11}=f_{22}=1, \, f_{12}=\sqrt{1+\ve_3}$, where $\ve_3 > 0$. Then $f_{11} f_{22} - f_{12}^2 < 0$, and for $\ve_1$ and $\ve_3$ small enough, $T(1,0), T(1, \pm \sqrt{3}) > 0$.

\bibliographystyle{plain}
\bibliography{cylinderarx}

\end{document}